\newtheorem{theorem}{Theorem}[section] 
\newtheorem{lemma}[theorem]{Lemma}
\theoremstyle{definition}
\newtheorem{example}[theorem]{Example}
\theoremstyle{remark}
\newtheorem{remark}[theorem]{Remark}
\numberwithin{equation}{section}
\DeclareMathOperator{\re}{Re}
\DeclareMathOperator{\im}{Im}
\DeclareMathOperator{\res}{res}
\title{Algebraic structure of the range of a trigonometric polynomial}
\author{Leonid V. Kovalev}
\address{215 Carnegie, Mathematics Department, Syracuse University, Syracuse, NY 13244, USA}
\email{lvkovale@syr.edu}
\thanks{L.V.K. supported by the National Science Foundation grant DMS-1764266.}
\author{Xuerui Yang}
\address{215 Carnegie, Mathematics Department, Syracuse University, Syracuse, NY 13244, USA}
\email{xyang20@syr.edu}
\thanks{X.Y. supported by Young Research Fellow award from Syracuse University.}
\subjclass[2010]{Primary 30B60; Secondary 12D10, 42A05} 
\keywords{Jordan curves, Laurent polynomials, trigonometric polynomials, self-intersections, Bezout theorem, resultant, intersection multiplicity}
\subjclass[2010]{Primary 26C05; Secondary 26C15, 31A05, 42A05} 
\keywords{Laurent polynomials, trigonometric polynomials, Bezout theorem, resultant, intersection multiplicity}
\begin{document}
\baselineskip6mm

\maketitle

\begin{abstract}
    The range of a trigonometric polynomial with complex coefficients can be interpreted as the image of the unit circle under a Laurent polynomial. We show that this 
    range is contained in a real algebraic subset of the complex plane. Although the containment may be proper, the difference between the two sets is finite, except for polynomials with certain symmetry. 
\end{abstract}

\section{Introduction}

In 1976 Quine~\cite[Theorem 1]{Quine76} proved that the image of the unit circle $\mathbb T$ under an algebraic polynomial $p$ of degree $n$ is contained in a real algebraic set $V = \{(x, y)\in \mathbb R^2 \colon q(x, y) = 0\}$ where $q$ is a polynomial of degree $2n$. In general $p(\mathbb T)$ is a proper subset of $V$, but we will show that $V\setminus p(\mathbb T)$ is finite, and that $V= p(\mathbb T)$ whenever $V$ is connected. 

Consider a trigonometric polynomial $P(t) = \sum_{k=-m}^n a_k e^{ikt}$, $t\in \mathbb R$, with complex coefficients $a_k$. It is natural to require $a_{-m}a_n\ne 0$ here. The range of $P$ is precisely the image of the unit circle $\mathbb T$ under the Laurent polynomial $p(z)= \sum_{k=-m}^n a_k z^k$. This motivates our investigation of $p(\mathbb T)$ for Laurent polynomials. Our main result, Theorem~\ref{algebraic-thm}, asserts that $p(\mathbb T)$ is contained in the zero set $V$ of a polynomial of degree $2\max(m, n)$. This matches Quine's theorem in the case of $p$ being an algebraic polynomial, i.e., $m=0$. The difference $V\setminus p(\mathbb T)$ is finite when $m\ne n$, but may be infinite when $m=n$. 

In Section~\ref{sec:exceptional} we investigate the exceptional case when $V\setminus p(\mathbb T)$ is infinite, and relate it to the properties of the zero set of a certain harmonic rational function. The structure of zero sets of such functions is a topic of current interest with applications to gravitational lensing~\cite{BHJR, KhavinsonNeumann}.

Finally, in Section~\ref{sec:intersect} we use the algebraic nature of the polynomial images of $\mathbb T$ to estimate the number of intersections of two such images, i.e., the number of shared values of two trigonometric polynomials.

\section{Algebraic nature of polynomial images of circles}

By definition, a real algebraic subset of $\mathbb R^2$ is a set of the form $\{(x, y) \in \mathbb R^2 \colon q(x, y) = 0\}$ where $q\in \mathbb R[x, y]$ is a polynomial in $x, y$. 
Consider a Laurent polynomial
\begin{equation}\label{Lpoly}
p(z) = \sum_{k=-m}^n a_k z^k,\quad z\in \mathbb C\setminus \{0\}, 
\end{equation}
where $m\ge 0$, $n\ge 1$, and $a_{-m} a_n\ne 0$. This includes the case of algebraic polynomials ($m=0$), because the condition $a_0\ne 0$ can be ensured by adding a constant to $p$, which does not affect the algebraic nature of $p(\mathbb T)$. Since we are interested in the image of the unit circle, which is invariant under the substitution of $z^{-1}$ for $z$, it suffices to consider the case $m\le n$.  

\begin{theorem}\label{algebraic-thm} Let $p$ be the Laurent polynomial~\eqref{Lpoly} with $m\le n$. 
\begin{enumerate}[(a)]
    \item The image of $\mathbb T$ under $p$, is contained in the zero set $V$ of some polynomial $h\in \mathbb R[x, y]$ of degree $2n$.
    \item If $h$ is expressed as a polynomial  
$h_{\mathbb C} \in  \mathbb C[w, \overline{w}]$ via the substitution $w=x+iy$, the degree of $h_{\mathbb C}$ in each of the variables $w$ and $\overline{w}$ separately is $m+n$.
\item If $m<n$, then the set $V\setminus p(\mathbb T)$ is finite.
\item In the case $m=n$ the set $V\setminus p(\mathbb T)$ is finite if and only if $V$ is bounded.
\end{enumerate} 
\end{theorem}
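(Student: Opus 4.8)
The forward implication is immediate: $p(\mathbb{T})$ is compact, so if $V\setminus p(\mathbb{T})$ is finite then $V$ is bounded. For the converse I would first recast $V$. With $h_{\mathbb{C}}$ as in parts (a) and (b), one has $h_{\mathbb{C}}(w,\bar w)=0$ if and only if some $\zeta\in\mathbb{C}^{*}$ satisfies $p(\zeta)=w$ and $p(1/\bar\zeta)=w$ (the two polynomials in $z$ whose resultant is $h_{\mathbb{C}}$ vanish exactly when $p(z)=w$, respectively when $p(1/\bar z)=w$). Hence
\[
V=p(\mathbb{T})\cup p(E'),\qquad E':=\{\zeta\in\mathbb{C}^{*}\setminus\mathbb{T}:p(\zeta)=p(1/\bar\zeta)\},
\]
and $E:=\mathbb{T}\cup E'$ is the zero set on $\mathbb{C}^{*}$ of the harmonic function $g(\zeta):=p(\zeta)-p(1/\bar\zeta)$, a set invariant under $\zeta\mapsto 1/\bar\zeta$. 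As $p\colon\mathbb{C}^{*}\to\mathbb{C}$ is proper and finite-to-one, everything comes down to understanding $E'$.

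Next I would pin down the dichotomy via the $2n$ roots of $p(\zeta)=w$, i.e.\ of $a_{n}z^{2n}+\dots+a_{-n}-wz^{n}$: for $|w|$ large, $n$ of them have $|\zeta|\sim|w/a_{n}|^{1/n}\to\infty$ and the remaining $n$ have $|\zeta|\sim|a_{-n}/w|^{1/n}\to 0$. A point of $p(E')$ requires two of these to be $\zeta$ and $1/\bar\zeta$, hence one large and one small, so $1=|\zeta|\cdot|1/\bar\zeta|\sim|a_{-n}/a_{n}|^{1/n}$, impossible for large $|w|$ unless $|a_{-n}|=|a_{n}|$. Consequently: if $|a_{-n}|=|a_{n}|$ then $E'$ accumulates at $0$ and $\infty$ (near $0$, $G(\zeta):=|\zeta|^{2n}g(\zeta)$ has lowest homogeneous part $a_{-n}\bar\zeta^{\,n}-a_{n}\zeta^{n}$, whose real zero set is one-dimensional precisely in this case), so $V$ is unbounded and $V\setminus p(\mathbb{T})$ is infinite; if $|a_{-n}|\ne|a_{n}|$ then $p(E')$, hence $V$, is bounded, and by properness of $p$ the set $E'$ lies in a compact annulus $\{\rho_{0}\le|\zeta|\le\rho_{0}^{-1}\}$, so $\overline{E'}\subseteq E$ is compact. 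This proves that $V$ is bounded exactly when $|a_{-n}|\ne|a_{n}|$ and settles the unbounded case; it remains to show that $|a_{-n}|\ne|a_{n}|$ forces $V\setminus p(\mathbb{T})$ finite.

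So assume $|a_{-n}|\ne|a_{n}|$. Since $p$ is finite-to-one and $V\setminus p(\mathbb{T})\subseteq p(E')$, it suffices to prove $E'$ finite, and as $\overline{E'}$ is compact it suffices to show that the only one-dimensional part of $E$ is $\mathbb{T}$. Suppose some other one-dimensional component $\Gamma$ existed. Since $E$ is locally a finite union of analytic arcs, $\Gamma$ has no endpoints, so being compact it contains a Jordan curve. The normal derivative of $g$ along $\mathbb{T}$ equals $2\zeta p'(\zeta)$, hence vanishes only at the finitely many critical points of $p$ on $\mathbb{T}$; away from those, $\mathbb{T}$ is locally the entire zero set of $g$, so $\Gamma$ meets $\mathbb{T}$ only at such points. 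Splicing arcs of $\Gamma$ with arcs of $\mathbb{T}$ between these points yields a Jordan curve $J$ inside the compact annulus, bounding a region $\Omega\not\ni 0$ with $g\equiv 0$ on $\partial\Omega$; the maximum principle applied to $\re g$ and $\im g$ forces $g\equiv 0$ on $\Omega$, hence on $\mathbb{C}^{*}$ — absurd. (One can also see this algebraically: write $G=(\zeta\bar\zeta-1)G_{1}$, valid since $g$ vanishes on $\mathbb{T}$; an extra one-dimensional component would force an irreducible Hermitian factor of $G_{1}$ with one-dimensional zero set, but the lowest and highest homogeneous parts of $G_{1}$ are $a_{n}\zeta^{n}-a_{-n}\bar\zeta^{\,n}$ and $(\zeta\bar\zeta)^{n-1}\bigl(a_{n}\zeta^{n}-a_{-n}\bar\zeta^{\,n}\bigr)$, and $a_{n}\zeta^{n}-a_{-n}\bar\zeta^{\,n}$ has no Hermitian factor when $|a_{-n}|\ne|a_{n}|$ — its linear factors $\zeta-\mu\omega^{j}\bar\zeta$, with $\mu^{n}=a_{-n}/a_{n}$ and $\omega=e^{2\pi i/n}$, are carried by conjugation to $\bar\zeta-\bar\mu\bar\omega^{j}\zeta$, scalar multiples of no $\zeta-\mu\omega^{k}\bar\zeta$ because $|\mu|\ne 1$.) With $\mathbb{T}$ the only one-dimensional part of $E$, the set $E\setminus\mathbb{T}=E'$ has no accumulation point and lies in a compact annulus, so it is finite; hence $V\setminus p(\mathbb{T})\subseteq p(E')$ is finite.

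The hard part is precisely this last paragraph: excluding extra one-dimensional pieces of the zero set of the harmonic function $g$, together with the accompanying local description of $E$ at the critical points of $p$ on $\mathbb{T}$ that the splicing step uses. This is the circle of ideas developed in Section~\ref{sec:exceptional} on zero sets of harmonic rational functions; the root counting and the boundedness dichotomy in the earlier steps are comparatively routine.
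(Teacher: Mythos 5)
Your proposal addresses only part (d), taking (a)--(b) as given and leaving (c) aside; within (d), the step that fails is the claimed dichotomy ``$V$ is bounded exactly when $|a_{-n}|\ne|a_n|$.'' The direction you need — that $|a_{-n}|=|a_n|$ forces $E'$ to accumulate at $0$ and $\infty$, hence $V$ unbounded — is false. Take $p(z)=z^2+z+z^{-1}-z^{-2}$, so $|a_{-2}|=|a_2|$. Then $P(z)=p(z)-p(1/\bar z)=2\re(z^2)+2i\,\im(z)+o(1)$ as $|z|\to\infty$, and $2\re(z^2)+2i\,\im(z)$ is a proper map of the plane (if $\im z$ stays bounded while $|z|\to\infty$, then $\re(z^2)\to\infty$); hence the zero set $E$ is bounded, bounded away from $0$ by the $z\mapsto 1/\bar z$ symmetry, and $V=p(E)$ is bounded even though $|a_{-n}|=|a_n|$. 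This is precisely the phenomenon behind Lemma~\ref{Campbell}(b) and Theorem~\ref{unbounded-thm}: $|a_n|=|a_{-n}|$ is necessary but not sufficient for unboundedness. The parenthetical justification you give is also invalid as a matter of principle: the lowest homogeneous part of $|\zeta|^{2n}g$ having a one-dimensional zero set does not force the zero set of $g$ to accumulate at the origin (compare $x^2+y^4$, whose lowest part $x^2$ vanishes on a line while the full zero set is a point).

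The consequence is a genuine gap: your proof of the hard implication ``$V$ bounded $\Rightarrow V\setminus p(\mathbb T)$ finite'' is carried out only under $|a_{-n}|\ne|a_n|$, and the bounded cases with $|a_{-n}|=|a_n|$ (which exist, as above) are not covered. The repair is to discard the coefficient dichotomy altogether and argue from the actual hypothesis, which is what the paper does: since $a_{-n}a_n\ne 0$, the map $p$ is proper on $\mathbb C\setminus\{0\}$, so boundedness of $V=p(E)$ already gives that $E$ is compact in $\mathbb C\setminus\{0\}$; your maximum-principle/splicing argument (which nowhere uses $|a_{-n}|\ne|a_n|$) then rules out one-dimensional pieces of $E$ other than $\mathbb T$, and finiteness of $E\setminus\mathbb T$ follows. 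This is essentially the paper's route, except that the paper handles the local structure and the ``no endpoints'' continuation step — which you assert somewhat informally — by invoking Sheil-Small's/Wilmshurst's theorem to produce a closed analytic curve inside $E$ and then applying the maximum principle, distinguishing whether the curve surrounds $0$ or not. Note also that the unbounded direction requires no accumulation analysis at all: $p(\mathbb T)$ is compact, so an unbounded $V$ trivially has infinite $V\setminus p(\mathbb T)$.
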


The proof of Theorem~\ref{algebraic-thm} involves two polynomials 
\begin{equation}\label{polys-g-gs}
g(z)=z^{m}(p(z)-w) \quad \text{and} \quad  g^*(z)= z^{n+m}\overline{g(1/\bar z)} = 
z^n\overline{(p(1/\bar{z})-w)}
\end{equation}
which are the subject of the following lemma. 

\begin{lemma}\label{degrees} The resultant $h_{\mathbb C} = \res(g, g^*)$ of the polynomials~\eqref{polys-g-gs} is a polynomial in $\mathbb C[w, \overline{w}]$ of  degree $2n$.  Moreover, $h_{\mathbb C}$ has degree $m+n$ in each of the variables $w$ and $\overline{w}$ separately. Finally, $h(x, y) := h_{\mathbb C}(x+iy, x-iy)$ is a polynomial of degree $2n$ in $\mathbb R[x, y]$. 
\end{lemma}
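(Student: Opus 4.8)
The plan is to analyze the resultant $h_{\mathbb C} = \res(g, g^*)$ by tracking degrees in $z$ and the behavior of the leading and trailing coefficients in $w, \overline w$. First I would observe that $g(z) = z^m(p(z) - w)$ is a genuine polynomial in $z$ of degree $m+n$ (since $a_n \ne 0$), with leading coefficient $a_n$ (independent of $w$) and constant term $a_{-m} - w \cdot [m=0]$; in any case the lowest-degree term is $z^0$ with coefficient $a_{-m}$ when $m \ge 1$, or $a_0 - w$ when $m = 0$. Similarly $g^*(z) = z^n\overline{(p(1/\bar z) - w)} = \sum_{k=-m}^n \bar a_k z^{n-k} - \bar w z^n$; this is a polynomial in $z$ of degree $n+m$ with leading coefficient $\bar a_{-m}$ and the coefficient of $z^n$ equal to $\bar a_0 - \bar w$. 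So both $g$ and $g^*$ have degree exactly $N := m+n$ in $z$, with leading coefficients that are nonzero constants (not involving $w$ or $\overline w$), hence the resultant $\res(g,g^*)$ is a well-defined element of $\mathbb C[w,\overline w]$ given by the $2N \times 2N$ Sylvester determinant, and no degree drop occurs.

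Next I would bound $\deg_w h_{\mathbb C}$. In the Sylvester matrix of $g$ and $g^*$ with respect to $z$, the variable $w$ appears only in the entries coming from coefficients of $g$ (each such entry is affine in $w$), and $\overline w$ appears only in entries from $g^*$. There are $\deg g^* = N$ rows built from the coefficients of $g$, and expanding the determinant, each term is a product of $N$ entries from those rows (one per row, but actually the Sylvester structure means each monomial in the expansion picks up at most one coefficient-of-$g$ entry per row times ... ) — more carefully, each of the $N$ "$g$-rows" contributes exactly one matrix entry to any given term of the determinant expansion, and at most one of the nonzero coefficients of $g$ in each row involves $w$ (namely $a_{-m} - w$ or $a_0 - w$ or just $-w$ depending on $m$; when $m\ge 1$ only the entry for the constant term $a_{-m}$ is free of $w$, while the entry for... ) — the cleanest route is: $w$ enters $g$ only through the single coefficient that multiplies $z^m$, so in each $g$-row at most one entry is $w$-dependent and it is affine in $w$; hence the determinant has degree at most $N = m+n$ in $w$. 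By the symmetric argument (conjugating, using $g^* $'s dependence on $\overline w$ through the coefficient of $z^n$), $\deg_{\overline w} h_{\mathbb C} \le m+n$. For the matching lower bounds, I would use the product formula $\res(g,g^*) = a_n^N \prod_{g(\zeta)=0}\, g^*(\zeta)$ where the product is over the $N$ roots $\zeta$ of $g$ (with multiplicity): each root $\zeta$ satisfies $p(\zeta) = w$, so $g^*(\zeta) = \zeta^n \overline{(p(1/\bar\zeta) - w)}$, and I would extract the top-degree-in-$w$ behavior. As $w \to \infty$, the roots of $p(z) = w$ split into $n$ roots going to $\infty$ like $w^{1/n}$ and $m$ roots going to $0$; plugging these asymptotics into $g^*(\zeta)$ shows each of the $n$ large roots contributes a factor growing like $\overline{w}$ (through the $-\bar w \zeta^n$... wait, $\zeta \to \infty$ so $\zeta^n \overline{p(1/\bar\zeta)} \approx \zeta^n \bar a_{-m} \bar\zeta^{\,m}$...) and each of the $m$ small roots contributes a factor growing in $w$. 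Bookkeeping these contributions gives exactly degree $m+n$ in $w$ and in $\overline w$; combined with the upper bounds this pins down both separate degrees. The total degree statement, $\deg h_{\mathbb C} = 2n$, would follow by a similar asymptotic analysis: sending $w \to \infty$ along a generic ray $w = r e^{i\theta}$ and collecting the joint growth in $r$ from all $m+n$ root-factors gives total degree $2n$ (the $m$ small roots contribute total degree $2m$ jointly in... no: $n + m$ from the large roots side and ... the claim is the homogeneous top part has degree $2n$, consistent with Quine when $m = 0$).

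Finally, the passage from $h_{\mathbb C}$ to $h$: the substitution $w = x+iy$, $\overline w = x - iy$ is an invertible linear change of variables, so $h(x,y) := h_{\mathbb C}(x+iy, x-iy)$ has the same total degree $2n$ as $h_{\mathbb C}$; and I would note $h$ has real coefficients because $g^*$ is, up to the unit factor $z^{n+m}$, the "conjugate reciprocal" of $g$, which makes $\res(g,g^*)$ invariant under the simultaneous conjugation $w \leftrightarrow \overline w$ (one checks $\overline{\res(g,g^*)} = \res(\bar g, \bar g^*) = \res(g^*, g) = \pm \res(g, g^*)$, and tracking the sign $(-1)^{N^2} = (-1)^N$ together with the leading-coefficient factors shows the value is actually conjugation-invariant, hence $h_{\mathbb C}(\overline w, w) = h_{\mathbb C}(w, \overline w)$ and $h$ is real-valued on $\mathbb R^2$, so $h \in \mathbb R[x,y]$).

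\medskip

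\noindent\textbf{Main obstacle.} The upper bound $\deg_w h_{\mathbb C} \le m+n$ is a clean Sylvester-matrix argument, but the matching lower bound and the total-degree claim both require honest asymptotic control of the roots of $p(z) = w$ as $w \to \infty$ (the Newton–Puiseux splitting into $n$ large and $m$ small roots) and careful bookkeeping of how each root-factor $g^*(\zeta)$ grows — especially ruling out cancellation among the leading terms of the product $\prod_\zeta g^*(\zeta)$. I expect that to be the technical heart of the proof; the reality of $h$ and the linear change of coordinates are routine by comparison.
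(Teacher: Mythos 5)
Your plan gets the easy half right (the observation that $w$ enters $g$ only through the coefficient of $z^m$, and $\overline w$ enters $g^*$ only through the coefficient of $z^n$, giving $\deg_w h_{\mathbb C}\le m+n$ and $\deg_{\overline w} h_{\mathbb C}\le m+n$), but the substance of the lemma is left undone, and you say so yourself in the ``main obstacle'' paragraph. Concretely: (i) you never obtain the upper bound $\deg h_{\mathbb C}\le 2n$ at all --- your separate-variable bounds only give total degree $\le 2(m+n)$, which is too weak when $m<n$. The paper gets $\le 2n$ immediately from the Sylvester matrix: every entry involving $w$ or $\overline w$ sits in the middle $2n$ columns, and each Leibniz term takes one entry per column. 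Your substitute (asymptotics of $\prod_\zeta g^*(\zeta)$ as $w\to\infty$ via the splitting into $n$ large and $m$ small roots of $p(z)=w$) can in principle deliver both bounds, but that is precisely the bookkeeping you did not carry out, and your inline attempts at it break off in confusion; moreover for $m=n$ the two competing terms $\overline{a_{-n}}\zeta^{2n}$ and $-\overline w\,\zeta^n$ in each large-root factor are of the same order, so even ``generic direction'' needs an argument there. (ii) For the lower bounds $\deg_w h_{\mathbb C}=\deg_{\overline w}h_{\mathbb C}=m+n$ you again defer to unexecuted Puiseux asymptotics; note also that the separate degree in $w$ concerns $h_{\mathbb C}$ as an element of $\mathbb C[w,\overline w]$ with $\overline w$ an independent variable, so the asymptotic argument must be run with $\overline w$ frozen, a point you do not address. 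The paper's route is one line: the only Leibniz term containing $w^{m+n}$ is $\pm\,\overline{a_n}^{\,m}\overline{a_{-m}}^{\,n}(a_0-w)^{m+n}$, whose coefficient is nonzero; and for $m<n$ the exact total degree $2n$ is then extracted by column operations reducing the determinant, modulo lower-order terms, to $\pm|a_n|^{2m}|a_0-w|^{2n}$.

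The reality step also has a genuine error as written. Coefficientwise conjugation does not send $g$ to $g^*$: one has $g^*=(\bar g)^{\mathrm{rev}}$, i.e.\ $\bar g$ is the coefficient-reversal of $g^*$ and $\overline{g^*}$ is the reversal of $g$, so your chain $\overline{\res(g,g^*)}=\res(\bar g,\overline{g^*})=\res(g^*,g)$ skips a reversal identity for resultants, which carries extra factors (powers of the constant terms, hence of $a_0-w$ and $a_{-m}$, and signs) that you only gesture at. The paper sidesteps this entirely by the root formula: for generic $w$, $\res(g,g^*)=(-1)^{n+m}|a_n|^{2(m+n)}\prod_{i,j}(z_i\overline{z_j}-1)$, which is manifestly real because conjugation permutes the factors, with the case $m=0$, $w\ne a_0$ handled by the same computation with $a_{-m}$ replaced by $a_0-w$. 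In short: your skeleton is a viable alternative in outline, but the exact-degree claims and the reality of $h$ --- the actual content of the lemma --- are not proved in what you wrote.
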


\begin{proof} 
Both $g$ and $g^*$ are polynomials of degree $m+n$ in $z$, except for the case $m=0$ and $w=a_0$ which we ignore in this proof because considering a generic $w$ is enough. By definition, the resultant of $g$ and $g^*$ is the determinant of the following matrix of size $2(m+n)$. 
\begin{equation}\label{sylvester-matrix}
R = 
\begin{pmatrix}
a_{-m} & \cdots & \cdots & a_0 - w &  \cdots  & a_{n} & 0 & 0 \\
0& \ddots & & & \ddots & & \ddots & 0 \\
0 & 0 & a_{-m} & \cdots  & \cdots  & a_0 - w  & \cdots  & a_{n} \\ 
\overline{a_{n}} & \cdots  &\overline{a_0} - \overline{w} &  \cdots  &  \cdots  & \overline{a_{-m}} & 0 & 0 \\
0& \ddots & & \ddots & & & \ddots & 0 \\
0 & 0 & \overline{a_{n}} & \cdots  & \overline{a_0} - \overline{w}  &  \cdots  & \cdots  & \overline{a_{-m}}  \\
\end{pmatrix}
\end{equation}

All appearances of $w$ or $\overline{w}$ in $R$ are in the columns numbered $m+1$  through $m+2n$, which are the middle $2n$ columns of matrix $R$. Therefore, $h_{\mathbb C}$ is a polynomial of degree at most $2n$. 

Let us first prove that $h_{\mathbb C}$ has degree $n+m$ in each variable separately. It obviously cannot be greater than $n+m$, since each of $w$ and $\overline{w}$ appears $n+m$ times in the matrix. The position of $a_0-w$ in the top half of the matrix shows that the Leibniz formula for $\det R$ contains the term $\pm \overline{a_n}^m\overline{a_{-m}}^n (a_0-w)^{n+m}$ and no other terms with the monomial $w^{n+m}$. Therefore, the coefficient of $w^{n+m}$ in $h$ is $\pm \overline{a_n}^m\overline{a_{-m}}^n \ne 0$. Similarly, 
 the coefficient of $\overline{w}^{2n}$ in $h$ is $\pm a_{-m}^n a_n^m  \ne 0$. This proves that $h_{\mathbb C}$ has degree $n+m$ in $w$ and $\overline{w}$ separately.

When $m=n$, the preceding paragraph shows that $h_{\mathbb C}$ has degree $2n$ in $w$ and $\overline{w}$ separately, which implies $\deg h = 2n$. 

We proceed to prove $\deg h_{\mathbb C} = 2n$ in the case $m<n$. Let $R_1$ be the matrix obtained from $R$ by replacing all constant entries in the columns $m+1, \dots, m+2n$ by $0$. Since the cofactor of any of the entries we replaced is a polynomial of degree less than $2n$, the difference $\det R - \det R_1$ has degree less than $2n$. Thus, it suffices to show that $\det R_1 $ has degree $2n$. 
When deriving a formula for $\det R_1 $ we may assume $w\ne a_0$. Let us focus on the columns of $R_1$ numbered $m+1, \dots,2m$: the only nonzero entries at these columns are: 
\begin{itemize}
    \item $a_0 - w $ at 
$(j-m, j)$ for $m+1\le j\le 2m$; 
    \item $\overline{a_0} -\overline{ w}$ at 
$(j+m, j)$ for $n+1\le j\le 2m$.
\end{itemize}

We can use column operations to eliminate all nonzero entries in the upper-left $m\times m$ submatrix of $R_1$. Since this submatrix is upper-triangular, the process only involves adding some multiples of $j$th column with $m+1\le j\le 2m$ to columns numbered $k$ where $j-m\le k \le m$.  Such a column operation also affects the bottom half of the matrix, where we add a multiple of 
the entry $(j+m, j)$ to the entry $(j+m, k)$. 
Since $(j+m)-k \le j+m - (j-m) = 2m < n+m$, the affected entries of the bottom half are strictly above the diagonal $\{(n+m+j, j)\colon 1\le j\le m\}$ which is filled with the value $a_n$. In conclusion, these column operations do not substantially affect the upper-triangular submatrix 
formed by the entries $(i, j)$ with $n+m+1\le i\le n+2m$, $1\le j\le m$, in the sense that the submatrix remains upper-triangular and its diagonal entries remain equal to $a_n$. 

Similar column operations on the right side of the matrix eliminate all nonzero entries in the bottom right $m\times m$ submatrix of $R_1$. Let $R_2$ be the resulting matrix: 
\[
R_2 = 
\begin{pmatrix}
0 & \cdots  & a_0 - w &  \cdots  &  \cdots  & 0 & 0 & 0 \\
0& \ddots & & \ddots & & & \ddots & 0 \\
0 & 0 & 0 & \cdots  & a_0 - w  &  \cdots  & \cdots  & a_n  \\
\overline{a_n} & \cdots & \cdots & \overline{a_0} - \overline{w} &  \cdots  & 0 & 0 & 0 \\
0& \ddots & & & \ddots & & \ddots & 0 \\
0 & 0 & 0 & \cdots  & \cdots  & \overline{a_0} - \overline{w}  & \cdots  & 0 \\ 
\end{pmatrix}
\]
We claim that $\det R_2 = \pm |a_n|^{2m} |a_0-w|^{2n}$. Indeed, the first $m$ columns of $R_2$ contain only an upper-triangular submatrix with $\overline{a_n}$ on the diagonal; the last $m$ columns contain only a lower-triangular matrix with $a_n$ on the diagonal. After these are accounted for, we are left with a $2n\times 2n$ submatrix in which every row has exactly one nonzero element, either $a_0 - w$ or its conjugate. This completes the proof of $\deg h_{\mathbb C} = 2n$. 

Define $h(x,y) = h_{\mathbb C}(x+iy, x-iy)$ for real $x, y$. We claim that $h$ is real-valued, and thus has real coefficients. Recall (e.g.,~\cite[p.~11]{Orzech}) that the resultant can be expressed in terms of the roots of the polynomials $g, g^*$. 
Let $z_1, \dots, z_{n+m}$ be the roots of $g$ listed with multiplicity.
To simplify notation, we separate the cases $m>0$ and $m=0$. 

\textbf{Case $m>0$}. We have $\prod_{k=1}^{m+n} z_k = (-1)^{n+m}a_{-m}/a_n$; in particular, $z_k\ne 0$ for all $k$. It follows from~\eqref{polys-g-gs} that $g^*$ has roots $1/\overline{z_k}$ for $k=1, \dots, n+m$. The leading terms of $g$ and $g^*$ are $a_n$ and $\overline{a_{-m}}$, respectively. Thus, 
\begin{equation}\label{res-zeros}
    \begin{split}
\res(g, g^*) & = (\overline{a_{-m}}a_n )^{m+n} \prod_{i, j=1}^{n+m} (z_i - 1/\overline{z_j}) = (\overline{a_{-m}}a_n )^{n+m} \prod_{i, j=1}^{n+m} \frac{z_i \overline{z_j} - 1}{\overline{z_j}} \\ 
& = (\overline{a_{-m}}a_n )^{n+m} \left(\prod_{j=1}^{m+n} \overline{z_j}\right)^{-(n+m)}
\prod_{i, j=1}^{n+m} (z_i \overline{z_j} - 1) 
\\ & = (-1)^{n+m} (\overline{a_{-m}}a_n )^{m+n} (\overline{a_n/a_{-m}})^{n+m} 
\prod_{i, j=1}^{n+m} (z_i \overline{z_j} - 1) 
\\ & = (-1)^{n+m} |a_n|^{2(m+n)} \prod_{i, j=1}^{n+m} (z_i \overline{z_j} - 1) 
\end{split}
\end{equation}
The latter product is evidently real. 

\textbf{Case $m=0$}. We have $\prod_{k=1}^{m+n} z_k = (-1)^{n}(a_{0}-w)/a_n$; in particular, $z_k\ne 0$ for all $k$ provided that $w\ne a_0$. The rest of the proof goes as in case $m>0$, with $a_{-m}$ replaced by $a_0-w$ throughout. Since $a_{-m}$ cancels out at the end of~\eqref{res-zeros}, the conclusion that $h$ is real-valued still holds.
\end{proof}

The following description of the local structure of the zero set of a complex-valued harmonic function is due to Sheil-Small (unpublished) and appears in~\cite{Wilmshurst}. 

\begin{theorem}\label{Wilmshurst}\cite[Theorem 3]{Wilmshurst} 
Let $\Omega\subset \mathbb C$ be a domain and let $f\colon \Omega\to\mathbb C$ be a harmonic function. Suppose that the points $\{z_k\}_{k=1}^\infty$ are distinct zeroes of $f$ which converge to a point $z^*\in \Omega$. Then $z^*$ is an interior point of a simple analytic arc $\gamma$ which is contained in $f^{-1}(0)$ and contains infinitely many of the points $z_k$. 
\end{theorem}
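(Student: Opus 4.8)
The plan is to reduce the vanishing of the $\mathbb{C}$-valued function $f$ to the vanishing of a single real harmonic function, to describe the zero set of that real function near $z^*$ as a finite union of simple analytic arcs through $z^*$ by means of the classical normal form of a holomorphic function at a zero, and finally to promote a subsequence of the $z_k$ lying on one such arc to the identical vanishing of $f$ along the whole arc via the one-variable identity theorem for real-analytic functions.

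First I would normalize $z^* = 0$ and observe that $f(0)=0$ by continuity. Writing $f = u + iv$ with $u,v$ real and harmonic, if $f\equiv 0$ on a neighborhood of $0$ the claim is immediate, so assume $f\not\equiv 0$; then, since $u$ and $v$ are real-analytic, at least one of them is not identically zero near $0$. Let $\psi$ denote such a component (so $\psi$ is real harmonic, $\psi\not\equiv 0$, and $\psi(0)=0$). Because $f^{-1}(0)\subseteq\psi^{-1}(0)$, it suffices to locate the desired arc inside $\psi^{-1}(0)$ and afterwards check that $f$ vanishes on it.

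Next I would analyze $\psi^{-1}(0)$ locally. On a small disk $\psi=\re F$ for a holomorphic $F$ with $F(0)=0$ and $F\not\equiv 0$; set $N=\mathrm{ord}_0 F\ge 1$. Writing $F(z)=z^N G(z)$ with $G(0)\ne 0$ and choosing a holomorphic branch of $G^{1/N}$, the map $\zeta(z)=z\,G(z)^{1/N}$ is a conformal change of coordinates with $\zeta(0)=0$, $\zeta'(0)\ne 0$, and $F=\zeta^N$, so that $\psi=\re(\zeta^N)$. Hence $\psi^{-1}(0)$ is the $\zeta$-preimage of $\{\re(w^N)=0\}$, a set of $2N$ equally spaced rays forming $N$ lines through the origin. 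Pulling these $N$ diameters back by the biholomorphism $\zeta^{-1}$ exhibits $\psi^{-1}(0)$, in a neighborhood of $0$, as a union of $N$ simple analytic arcs $\gamma_1,\dots,\gamma_N$, each passing through $0$ as an \emph{interior} point; concretely, $\gamma_j$ is parametrized by $t\mapsto \zeta^{-1}(te^{i\theta_j})$, $t\in(-\epsilon,\epsilon)$, which is injective with nonzero derivative at $t=0$.

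The main obstacle is precisely this local structure of the zero set of a harmonic function, and it is settled by the conformal normal form above; the remaining steps are soft. All but finitely many $z_k$ lie in the disk and hence in $\bigcup_j\gamma_j$, so by the pigeonhole principle infinitely many of them lie on a single arc $\gamma:=\gamma_{j_0}$, at parameter values $t_k\to 0$. Finally, the map $t\mapsto f\bigl(\zeta^{-1}(te^{i\theta_{j_0}})\bigr)$ is a $\mathbb{C}$-valued real-analytic function on $(-\epsilon,\epsilon)$ whose zeros accumulate at the interior point $t=0$, so by the identity theorem for real-analytic functions of one variable it vanishes identically. Therefore $\gamma\subseteq f^{-1}(0)$; it is a simple analytic arc with $z^*=0$ in its interior and it contains the infinitely many points $z_k$ selected above, which is exactly the assertion.
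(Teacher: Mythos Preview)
The paper does not supply its own proof of Theorem~\ref{Wilmshurst}; it simply cites \cite[Theorem 3]{Wilmshurst} and adds the remark that the clause ``$\gamma$ contains infinitely many of the $z_k$'' is a consequence of Wilmshurst's argument. There is therefore no in-paper proof to compare against.

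That said, your argument is correct and is essentially the standard one. Passing to a nonvanishing real harmonic component $\psi$ of $f$, writing $\psi=\re F$ and putting $F$ in the local normal form $F=\zeta^N$ via a conformal change of variable, one sees that $\psi^{-1}(0)$ near $z^*$ is exactly the union of $N$ simple analytic arcs through $z^*$; since $f^{-1}(0)\subseteq\psi^{-1}(0)$, pigeonholing the $z_k$ onto one arc and applying the identity theorem for real-analytic functions of one variable finishes the job. The only step worth flagging is your use of the one-variable identity theorem along the arc to upgrade from ``infinitely many zeros accumulating at $t=0$'' to ``$f\equiv 0$ on $\gamma$'': this is precisely the extra information the paper says is implicit in Wilmshurst's proof, and your argument makes it explicit.
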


The fact that $z_k\in \gamma$ for infinitely many $k$ is not stated in \cite[Theorem 3]{Wilmshurst} but is a consequence of the proof. 

\begin{proof}[Proof of Theorem~\ref{algebraic-thm}]
(a)-(b) Suppose $w\in p(\mathbb T)$. Then the rational functions $p(z)-w$ and $\overline{p(1/\bar z)-w}$ have a common zero, namely, any preimage of $w$ that lies on $\mathbb T$. Consequently, the polynomials~\eqref{polys-g-gs} have a common zero, which implies that their resultant $h_{\mathbb C}=\res(g, g^*)$ vanishes at $w$. The claims (a) and (b) follow from Lemma~\ref{degrees}. For future references, note that the zero set of $h$ can be written as
\begin{equation}\label{V-structure}
V = h^{-1}(0) = p(E),\quad \text{where } 
E = \{z\in \mathbb C\setminus \{0\} \colon p(z) = p(1/\bar z)\}. 
\end{equation}

(c) In view of~\eqref{V-structure}, to prove that $V\setminus p(\mathbb T)$ is finite it suffices to show that $E\setminus \mathbb T$ is finite.  Let $q(z) = p(z) - p(1/\bar z)$ which is a harmonic Laurent polynomial. Since $m<n$, it follows that $q(z) = p(z) + O(|z|^m) = a_n z^n + O(|z|^{n-1})$ as $|z|\to \infty$. Thus $E$ is a bounded set. By symmetry, $E$ is also bounded away from $0$. 

Suppose that $E\setminus \mathbb T$ is infinite. Then it contains a convergent sequence of distinct points $z_k\to z^* \ne 0$. By Theorem~\ref{Wilmshurst} there exists a simple analytic arc $\Gamma$ such that $g_{|\Gamma} = 0$ and $z^*$ is an interior point of $\Gamma$. In the case $z^*\in \mathbb T$, the arc $\Gamma$ is not a subarc of $\mathbb T$, because it contains infinitely many of the points $z_k$ which are not on $\mathbb T$. By virtue of its analyticity, $\gamma$ has finite intersection with $\mathbb T$. By shrinking $\gamma$ we can achieve that $\gamma \cap \mathbb T = \{z^*\}$ if  $z^*\in \mathbb T$, and $\gamma \cap \mathbb T = \emptyset$ otherwise. 

Since the endpoints of $\gamma$ lie in $E\setminus \mathbb T$, the process described above can be iterated to extend $\gamma$ further in both directions. This continuation process can be repeated indefinitely. Since $E$ is bounded, we conclude that $E$ contains a simple closed analytic curve $\Gamma$, as in the proof of ~\cite[Theorem~4]{Wilmshurst}. 

If $\Gamma$ does not surround $0$, then the maximum principle yields $q\equiv 0$ in the domain  enclosed by $\Gamma$, which is impossible since $q$ is nonconstant. If $\Gamma $ surrounds $0$, then the complement of $\Gamma\cup \mathbb T$ has a connected component $G$ such that $0\notin G$. The maximum principle yields  $q\equiv 0$ in $G$, a contradiction. The proof of (b) is complete. 

(d) The proof of (c) used the assumption $m<n$ only to establish that the set $E$ in~\eqref{V-structure} is bounded. Thus, the conclusion still holds if $m=n$ and $E$ is a bounded set.  Recalling that $V = p(E)$ and $|p(z)|\to\infty$ as $|z|\to\infty$, we find that $E$ is bounded whenever $V$ is bounded.  

Finally, if $V$ is an unbounded set, then $V\setminus p(\mathbb T)$ must be infinite because $p(\mathbb T)$ is bounded. \end{proof}

Since a real algebraic set has finitely many connected components~\cite[Theorem~3]{Whitney}, it follows from Theorem~\ref{algebraic-thm} that when $V\setminus p(\mathbb T)$ is finite, the set $p(\mathbb T)$ coincides with one of the connected components of $V$, and the other components of $V$ are singletons. The number of singleton components of $V$ can be arbitrarily large, even when $p$ is an algebraic polynomial.

\begin{remark} For every integer $N$ there exists a polynomial $p$ such that the set $V\setminus p(\mathbb T)$ described in Theorem~\ref{algebraic-thm} contains at least $N$ points.
\end{remark}

\begin{proof} Let $a_1, \dots, a_N$ be distinct complex numbers with $0<|a_k|<1$ for $k=1, \dots, N$. Using Lagrange interpolation, we get a polynomial $q$ of degree $2N-1$ such that $q(a_k)=q(1/\bar a_k) = k$ for $k=1, \dots, N$. Let $r$ be a polynomial of degree $2N$ with zeros at the points $a_k$ and $1/\bar a_k$, $k=1,\dots, N$. Since $\inf_{\mathbb T}|r|>0$, for sufficiently large constant $M$ the polynomial $p = q+Mr$ satisfies 
$q(a_k)=q(1/\bar a_k) = k$ for $k=1, \dots, N$, as well as $|p(z)|>N$ for $z\in \mathbb T$. It follows that the algebraic set $V$, as described by~\eqref{V-structure}, contains the points $1, \dots, N$, none of which lie on the curve $p(\mathbb T)$.
\end{proof}

\section{Examples}\label{sec:example} 

First we observe that $p(\mathbb T)$ need not be a real algebraic set, even for a quadratic polynomial $p$. 

\begin{example} Let $p(z)=z^2+3z+1$. Then $p(\mathbb T)$ is not a real algebraic set. 
\end{example}

\begin{proof} Direct computation of the polynomial $h$ in Theorem~\ref{algebraic-thm} yields
\begin{equation}\label{h-example}
\begin{split} h(x, y) & =
\det \begin{pmatrix}
1 - w & 3 & 1 & 0 \\
0 & 1 - w & 3 & 1 \\
1 & 3 & 1- \overline{w}  & 0 \\
0 & 1  & 3 & 1- \overline{w} \\
\end{pmatrix}
\\ & = 
x^4 + 2 x^2 y^2  + y^4
- 4 x^3 - 4 x y^2 - 5 x^2 - 9 y^2   
\end{split}
\end{equation}
where $w = x+iy$. 
By Theorem~\ref{algebraic-thm} the set $h^{-1}(0)$ contains $p(\mathbb T)$. Since $p\ne 0$ on $\mathbb T$,   we have $0\in h^{-1}(0) \setminus p(\mathbb T)$. If $p(\mathbb T)$ was an algebraic set, then $V$ would be reducible. However, $h$ is an irreducible polynomial. Indeed, the fact that the zero set of $h$ is bounded implies that any nontrivial factorization $h=fg$ would have $\deg f = \deg g = 2$. This means that $V$ is the union of two conic sections, which it evidently is not, as $p(\mathbb T)$ is not an ellipse.
\end{proof}

\begin{figure}[h]
    \centering
    \includegraphics[width=0.4\textwidth]{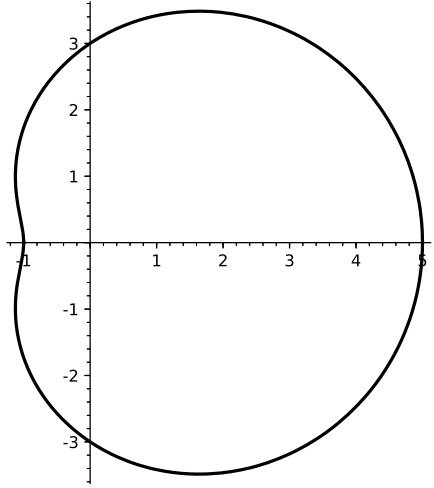}
    \caption{Non-algebraic image of the circle}
    \label{fig:example}
\end{figure}

According to Theorem~\ref{algebraic-thm}, the set $p(\mathbb T)$ can be completed to a real algebraic set by adding finitely many points, provided that $p$ is either an algebraic polynomial or a Laurent polynomial with $m<n$. The following example shows that the case $m=n$ is indeed exceptional. 

\begin{example}\label{line-segment}
Let $p(z)=z + z^{-1}$. Then $p(\mathbb T)$ is the line segment $[-2, 2]$. The smallest real algebraic set containing $p(\mathbb T)$ is the real line $\mathbb R$. 
\end{example}

The claimed properties of Example~\ref{line-segment} are straightforward to verify. In addition, the polynomial $h$ from Theorem~\ref{algebraic-thm} can be computed as $h(x, y) = -4y^2$ which shows that $h$ is not necessarily irreducible. 

\section{Zero set of harmonic Laurent polynomials}\label{sec:exceptional}

The relation~\eqref{V-structure} highlights the importance of the zero set of the harmonic Laurent polynomial $P(z)=p(z)-p(1/\bar{z})$ where $p$ is a Laurent polynomial. It is not a trivial task to determine whether a given harmonic Laurent polynomial has unbounded zero set: e.g., Khavinson and Neumann~\cite{KhavinsonNeumann} remarked on the varied nature of zero sets for rational harmonic functions in general.   In this section we develop a necessary condition, in terms of the coefficients of $p$, for the function $P$ to have an unbounded zero set. 

Suppose that $p$ is a Laurent polynomial~\eqref{Lpoly} such that the associated function $P(z)=p(z)-p(1/\bar{z})$ has unbounded zero set. Consider the algebraic part of $P$, namely 
\begin{equation}\label{algebraic-part}
q(z)=\sum_{k=1}^n a_k z^k - \sum_{k=1}^m a_{-k}{\bar{z}}^k.
\end{equation}
Then $q$ is a harmonic polynomial such that $\liminf_{z\to \infty}|q(z)|$ is finite. In other words, $q$ is not a proper map of the complex plane. 

One necessary condition is immediate: if  $m<n$, then $|q(z)| = a_n |z|^n + o(|z|^n)$ as $z\to\infty$. Thus, $P$ can only have unbounded zero set if $m=n$.  

We look for further conditions on a harmonic polynomial that ensure that it is a proper map of $\mathbb R^2$ to $\mathbb R^2$. More generally, given a polynomial map $F=(F_1, \dots, F_n)\colon \mathbb R^n\to \mathbb R^n$, let us decompose each component $F_k$ into homogeneous polynomials, and let $\mathcal H(F_k)$ be the homogeneous term of highest degree in $F_k$.  
Write $\mathcal H(F)$ for $(\mathcal H(F_1), \dots, \mathcal H({F_n}))$, so that $\mathcal H(F)$ is also a polynomial map of $\mathbb R^n$. The following result is from~\cite{Essen}, Lemma 10.1.9.  

\begin{lemma}\label{Campbell-general} [L. Andrew Campbell] If $\mathcal H({F})$ does not vanish in $\mathbb R^n\setminus \{0\}$, then $F\colon \mathbb R^n\to\mathbb R^n$ is a proper map, that is $|F(x)|\to \infty$ as $|x|\to\infty$. 
\end{lemma}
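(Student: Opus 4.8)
The plan is to prove Lemma~\ref{Campbell-general} by contradiction, exploiting compactness of the unit sphere $S^{n-1}$ together with the homogeneity of $\mathcal H(F)$. Suppose $F$ is not proper. Then there is a sequence $x_j\in\mathbb R^n$ with $|x_j|\to\infty$ but $|F(x_j)|$ bounded, say $|F(x_j)|\le C$ for all $j$. Writing $x_j = r_j u_j$ with $r_j = |x_j|\to\infty$ and $u_j\in S^{n-1}$, we may pass to a subsequence so that $u_j\to u\in S^{n-1}$. The idea is that along this sequence the lower-order terms of each $F_k$ become negligible relative to the top-degree term, so that $r_j^{-d_k}F_k(x_j)\to \mathcal H(F_k)(u)$, where $d_k = \deg F_k$; since $|F_k(x_j)|\le C$ and $r_j\to\infty$, the left side tends to $0$, forcing $\mathcal H(F_k)(u)=0$ for every $k$, hence $\mathcal H(F)(u)=0$ with $u\ne 0$, contradicting the hypothesis.

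First I would set up notation carefully: for each $k$, write $F_k = \mathcal H(F_k) + G_k$ where $G_k$ collects the homogeneous pieces of $F_k$ of degree $< d_k$ (and handle the trivial case $F_k\equiv 0$, for which $\mathcal H(F_k)=0$ vanishes everywhere, so the hypothesis already forces $n$ to be treated componentwise—indeed if some $F_k$ is identically zero the hypothesis fails unless $n=0$, so we may assume each $F_k$ is a nonzero polynomial with a well-defined degree $d_k\ge 0$; the $d_k=0$ case makes $F_k$ a nonzero constant and $|F(x_j)|$ would then already be bounded below, so that case needs a one-line remark too). Next I would record the elementary homogeneity scaling: $\mathcal H(F_k)(r u) = r^{d_k}\mathcal H(F_k)(u)$, and the estimate $|G_k(ru)| \le c_k\, r^{d_k - 1}$ for $r\ge 1$ and $u\in S^{n-1}$, where $c_k$ depends only on the coefficients of $G_k$ (a finite sum of bounded homogeneous functions on the compact sphere, each scaling with a strictly smaller power of $r$).

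Then I would run the limiting argument. From $|F_k(x_j)| \le C$ we get
\[
|\mathcal H(F_k)(u_j)| = r_j^{-d_k}\,|\mathcal H(F_k)(x_j)| \le r_j^{-d_k}\bigl(|F_k(x_j)| + |G_k(x_j)|\bigr) \le r_j^{-d_k} C + c_k r_j^{-1}.
\]
Letting $j\to\infty$ and using $\mathcal H(F_k)(u_j)\to \mathcal H(F_k)(u)$ by continuity, the right-hand side tends to $0$ (here I use $d_k\ge 0$; if $d_k = 0$ then $\mathcal H(F_k)$ is a nonzero constant and the inequality already gives a contradiction directly). Hence $\mathcal H(F_k)(u) = 0$ for each $k = 1,\dots,n$, so $\mathcal H(F)(u) = 0$. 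Since $u\in S^{n-1}$, in particular $u\in\mathbb R^n\setminus\{0\}$, contradicting the assumption that $\mathcal H(F)$ does not vanish on $\mathbb R^n\setminus\{0\}$.

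I do not expect any serious obstacle here; the proof is essentially a soft compactness-and-scaling argument. The only points requiring a little care are the bookkeeping of degenerate cases (a component that is identically zero or constant) and making explicit that the bound $|G_k(ru)|\le c_k r^{d_k-1}$ is uniform over $u\in S^{n-1}$, which follows because each homogeneous summand of $G_k$ is continuous hence bounded on the compact sphere. Since this lemma is quoted from~\cite{Essen} I would keep the write-up brief, perhaps relegating the degenerate-case remarks to a parenthetical.
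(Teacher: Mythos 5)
Your compactness-and-scaling argument is a correct and self-contained proof of the core statement, and in that sense it goes beyond the paper, which offers no proof at all: Lemma~\ref{Campbell-general} is simply quoted from \cite{Essen}, Lemma 10.1.9. For every component with $d_k\ge 1$ your chain of inequalities is sound (note, though, that the parenthetical ``here I use $d_k\ge 0$'' should read $d_k\ge 1$: it is exactly the positivity of $d_k$ that makes $r_j^{-d_k}C\to 0$), and components with $F_k\equiv 0$ cause no trouble since $\mathcal H(F_k)(u)=0$ holds trivially, so the common zero $u\ne 0$ of all the leading forms is produced and contradicts the hypothesis.

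The two degenerate-case parentheticals, however, are wrong as written. First, if some $F_k$ is a nonzero constant ($d_k=0$), your inequality only yields $|\mathcal H(F_k)(u_j)|\le C+o(1)$, which is no contradiction at all; and indeed no argument can rescue this case, because the lemma as literally stated is false there: $F(x_1,x_2)=(1,x_2)$ has $\mathcal H(F)=(1,x_2)$ vanishing nowhere, yet $F$ is not proper. The honest fix is to record that the statement is to be read with each nonzero component of positive degree (as in the source, and as holds in the paper's application through Lemma~\ref{Campbell}, where the two components have leading forms $2\re(\eta a_n z^n)$ and $\im((\eta a_K-\overline{\eta b_K})z^K)$ with $n,K\ge 1$). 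Second, the claim that $F_k\equiv 0$ makes the hypothesis fail is backwards: the hypothesis concerns the vector $\mathcal H(F)$, so one identically zero component does not force $\mathcal H(F)$ to vanish (e.g.\ $F=(0,x_1^2+x_2^2)$ satisfies the hypothesis and is proper); as noted above this case folds harmlessly into your main limiting argument and needs no separate disclaimer.
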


Lemma~\ref{Campbell-general} can be restated in a form adapted to harmonic polynomials in $\mathbb C$.

\begin{lemma}\label{Campbell} Consider a harmonic polynomial $q(z) = \sum_{k=0}^n (a_k z^k + b_k \bar z^k)$ of degree $n\ge 1$ as a map from $\mathbb C$ to $\mathbb C$. 
\begin{enumerate}[(a)]
    \item If $|a_n|\ne |b_n|$, then $q$ is proper.
    \item  If $|a_n|=|b_n|$, let $\eta\in \mathbb T$ be such that $\eta a_n = \overline{\eta b_n}$. If $\eta a_k = \overline{\eta b_k}$ for $k=1, \dots, n$, then $q$ is not proper. Otherwise, let $K$ be the largest value of $k$ such that  
$\eta a_k \ne  \overline{\eta b_k}$. If there is no $z\ne 0$ such that 
\[
\re(\eta a_n z^n) = 0 = \im((\eta a_K -  \overline{\eta b_K}) z^K) 
\]
then $q$ is proper. 
\end{enumerate} 
\end{lemma}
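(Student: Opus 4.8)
The plan is to read off the leading homogeneous part $\mathcal H(q)$ of $q$, regarded as a self-map of $\mathbb R^2\cong\mathbb C$, and feed it into Campbell's criterion, Lemma~\ref{Campbell-general}. The degree-$n$ homogeneous part of $q$ as a map into $\mathbb C$ is $z\mapsto a_nz^n+b_n\bar z^n$, but one must be careful: $\mathcal H(q)=(\mathcal H(\re q),\mathcal H(\im q))$ need not coincide with this single expression, since the degree-$n$ contribution to $\re q$ or to $\im q$ may vanish identically as a polynomial, dropping the leading term of that coordinate to lower degree. So the argument splits on whether $a_nz^n+b_n\bar z^n$ has a zero on $\mathbb C\setminus\{0\}$.

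For part (a), if $|a_n|\ne|b_n|$ then $|a_nz^n+b_n\bar z^n|\ge|z|^n\,\big|\,|a_n|-|b_n|\,\big|>0$ for all $z\ne 0$; moreover the degree-$n$ part of $\re q$ (resp.\ $\im q$) can vanish identically only if $a_n=\mp\overline{b_n}$, hence only if $|a_n|=|b_n|$, so $\mathcal H(q)$ really is the degree-$n$ part of $q$ and it is nonzero off the origin. Lemma~\ref{Campbell-general} then gives that $q$ is proper. For part (b) assume $|a_n|=|b_n|\ne 0$. Replacing $q$ by $\eta q$ changes neither the conclusion (post-composition with a unimodular constant is an isometry, and properness of a continuous self-map of $\mathbb R^2$ is exactly coercivity) nor the hypotheses (which are unaltered when $\eta$ is replaced by $-\eta$). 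The point of $\eta$ is that $\eta b_n=\overline{\eta a_n}$, so the degree-$n$ term of $\eta q$ equals $2\re(\eta a_nz^n)$, which is real; hence $\deg\im(\eta q)\le n-1$. More generally, from $\re(\alpha z^k+\beta\bar z^k)=\re((\alpha+\bar\beta)z^k)$ and $\im(\alpha z^k+\beta\bar z^k)=\im((\alpha-\bar\beta)z^k)$ one gets $\re(\eta q)(z)=\re P(z)$ with $P(z)=\sum_{k=0}^n(\eta a_k+\overline{\eta b_k})z^k$, and $\im(\eta q)(z)=\sum_{k=0}^n\im\bigl((\eta a_k-\overline{\eta b_k})z^k\bigr)$; for $k\ge 1$ the degree-$k$ term of the latter vanishes identically precisely when $\eta a_k=\overline{\eta b_k}$.

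Suppose first that $\eta a_k=\overline{\eta b_k}$ for all $k=1,\dots,n$. Then $\im(\eta q)\equiv c$ for a constant $c$, so $\eta q$ maps $\mathbb C$ into the line $L=\{w\colon\im w=c\}$. The leading coefficient of $P$ is $\eta a_n+\overline{\eta b_n}=2\eta a_n\ne 0$, so $\deg P=n\ge 1$; on a circle $|z|=R$ its leading term $\re(2\eta a_nz^n)$ takes values near $\pm 2|a_n|R^n$ while the rest of $\re P$ is $O(R^{n-1})$ there, so for large $R$ the function $\re P$ changes sign on $|z|=R$ and the level set $\{\re P=0\}$ meets every sufficiently large circle, hence is unbounded. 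Therefore $(\eta q)^{-1}\{w\in L\colon|\re w|\le 1\}\supseteq\{\re P=0\}$ is unbounded, $\eta q$ is not proper, and neither is $q$; this is the "not proper" clause of (b).

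Otherwise some $k\in\{1,\dots,n\}$ has $\eta a_k\ne\overline{\eta b_k}$; let $K$ be the largest such index, so $K\le n-1$ since $\eta a_n=\overline{\eta b_n}$. From the identities above, $\re(\eta q)$ has degree $n$ with $\mathcal H(\re(\eta q))=2\re(\eta a_nz^n)$ and $\im(\eta q)$ has degree exactly $K$ with $\mathcal H(\im(\eta q))=\im\bigl((\eta a_K-\overline{\eta b_K})z^K\bigr)$, both nonzero polynomials because $\eta a_n$ and $\eta a_K-\overline{\eta b_K}$ are nonzero and the exponents are positive. Hence $\mathcal H(\eta q)$ has a zero at some $z\ne 0$ if and only if $\re(\eta a_nz^n)=0=\im\bigl((\eta a_K-\overline{\eta b_K})z^K\bigr)$, and if there is no such $z$ then Lemma~\ref{Campbell-general} shows that $\eta q$, and therefore $q$, is proper. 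I expect the one genuinely non-formal point to be the degenerate case above, where Campbell's criterion is unavailable and non-properness has to be obtained directly from the unboundedness of the level set $\{\re P=0\}$; everything else is bookkeeping whose main organizing device is the normalization by $\eta$, which isolates the true leading term of each real coordinate of $q$.
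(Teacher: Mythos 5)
Your proof is correct and follows essentially the same route as the paper: normalize by $\eta$, identify the true leading homogeneous parts $\mathcal H(\re(\eta q))=2\re(\eta a_n z^n)$ and $\mathcal H(\im(\eta q))=\im((\eta a_K-\overline{\eta b_K})z^K)$, and apply Lemma~\ref{Campbell-general}. The only deviations are minor: the paper proves (a) directly from the reverse triangle inequality, $|q(z)|\ge \big||a_n|-|b_n|\big|\,|z|^n+o(|z|^n)$, rather than through Campbell's criterion, and in the degenerate case ($\eta a_k=\overline{\eta b_k}$ for all $k$) it deduces non-properness by citing Harnack's inequality (a nonconstant real-valued harmonic function on $\mathbb C$ is unbounded above and below, so a suitable level set is unbounded), whereas you get the same unbounded level set by an explicit sign-change argument on large circles; your version is, if anything, more self-contained, and your explicit attention to the possibility that a coordinate of $\mathcal H(q)$ drops degree is a point the paper handles only implicitly through the $\eta$-normalization.
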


\begin{proof} Part (a) follows from the reverse triangle inequality: $|q(z)| \ge \big||a_n| - |b_n|\big| |z|^n + o(|z|^n)$ as $n\to\infty$. 
 To prove part (b), observe that
 \begin{equation}\label{Campbell-proof}
 \im(\eta q(z)) = \sum_{k=0}^n \im((\eta a_k -  \overline{\eta b_k} z^k)
  \end{equation}
 If $\eta a_k = \overline{\eta b_k}$ for $k=1, \dots, n$, then $\im(\eta q)$ is constant, which means that up to a constant term, $\eta q$ is a real-valued harmonic function. By Harnack's inequality, a nonconstant harmonic function $h\colon \mathbb C\to\mathbb R$ must be unbounded from above and from below, and therefore $q^{-1}(0)$ is an unbounded set. Since $q$ is constant on an unbounded set, it is not a proper map. 
 
 Finally, suppose that $K$, as defined in (b), exists. It follows from~\eqref{Campbell-proof} that 
 \[
 \mathcal H( \im(\eta q(z)) ) = \im((\eta a_K -  \overline{\eta b_K}) z^K) 
 \]
 Since also
 \[
\mathcal H( \re(\eta p(z)) ) = \re((\eta a_n +  \overline{\eta b_n}) z^n) = 2\re(\eta a_n z^n)
 \]
the last statement in (b) follows by applying Lemma~\ref{Campbell-general}  to $(\re(\eta q), \im(\eta q))$ considered as a map of $\mathbb R^2$ to  $\mathbb R^2$.
\end{proof}
 
We are now ready to apply  Lemma~\ref{Campbell} to the special case $P(z) = p(z)-p(1/\bar z)$ where $p$ is a Laurent polynomial. Recall that in view of Theorem~\ref{algebraic-thm} and the relation~\eqref{V-structure} the following result describes when the image $p(\mathbb T)$ has infinite complement in the real algebraic set $V$ containing it.  

\begin{theorem}\label{unbounded-thm} Given a Laurent polynomial $p(z) = \sum_{k=-n}^n a_k z^n$ with $a_n a_{-n} \ne 0$, let $P(z) = p(z) - p(1/\bar z)$. If the zero set of $P$ is unbounded, then one of the following holds:  
\begin{enumerate}[(a)]
\item $p(\mathbb T)$ is contained in a line; 
\item There exists $\eta\in \mathbb T$ such that $\eta a_n + \overline{\eta a_{-n}} = 0$. Furthermore, there is an integer $k\in \{1, \dots, n-1\}$ such that the harmonic polynomial $\im((\eta a_k + \overline{\eta a_{-k}})z^k)$ is nonconstant and shares a nonzero root with the harmonic polynomial $\re(\eta a_n z^n)$.  
\end{enumerate} 
As a partial converse: if (a) holds, then the zero set of $P$ is unbounded. 
\end{theorem}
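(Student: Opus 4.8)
The plan is to prove Theorem~\ref{unbounded-thm} by analyzing the map $P(z) = p(z) - p(1/\bar z)$ directly through Lemma~\ref{Campbell}, and then to establish the stated partial converse separately. Since the first three assertions of the theorem are essentially a translation of Lemma~\ref{Campbell}(b) to the setting $q = $ (algebraic part of $P$), with $a_k$ in the lemma playing the role of the coefficients of the holomorphic part and $b_k = -a_{-k}$ playing the role of the antiholomorphic part, the main content I need to supply here is the converse: \emph{if $p(\mathbb T)$ is contained in a line, then the zero set of $P$ is unbounded.}

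First I would set up the converse. Suppose $p(\mathbb T) \subset L$ where $L$ is a line in $\mathbb C$; write $L = \{w : \re(\eta w) = c\}$ for some $\eta \in \mathbb T$ and $c \in \mathbb R$. Then for all $z \in \mathbb T$ we have $\re(\eta p(z)) = c$, so the harmonic function $u(z) := \re(\eta p(z)) - c$ vanishes on $\mathbb T$. The key observation is that $u$ is the real part of the harmonic Laurent polynomial $\eta p(z) - c$, and a harmonic function vanishing on the entire unit circle must have a large zero set: indeed, by the reflection/symmetry structure, $u(z)$ and $u(1/\bar z)$ are both harmonic, agree on $\mathbb T$, and their difference is harmonic on $\mathbb C \setminus \{0\}$ vanishing on $\mathbb T$. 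Rather than chase that, the cleaner route is: since $\re(\eta p(z)) \equiv c$ on $\mathbb T$, the same holds on $\mathbb T$ for $\re(\eta p(1/\bar z))$ (as $1/\bar z = z$ there), hence $\re(\eta P(z)) = \re(\eta p(z)) - \re(\eta \overline{p(1/\bar z)}) $ — wait, one must be careful that $P(z) = p(z) - p(1/\bar z)$, not $p(z) - \overline{p(1/\bar z)}$. So on $\mathbb T$, $P(z) = p(z) - p(z) = 0$ trivially; that only recovers $\mathbb T \subset P^{-1}(0)$. To get unboundedness I instead argue: the function $\eta p(z) + \overline{\eta p(1/\bar z)}$ has real part $2\re(\eta p(z))$ on... no. Let me instead use the approach of Lemma~\ref{Campbell}(b) reversed: if $p(\mathbb T)$ lies in a line then, after the rotation $\eta$, the holomorphic polynomial part has $\eta a_k = \overline{\eta a_{-k}}$... actually I should check: $p$ maps $\mathbb T$ into a line iff $\eta p(z) - \overline{\eta p(z)}$ is constant on $\mathbb T$, i.e. $\eta p(z) - \overline{\eta}\,\overline{p(z)}$ is constant on $\mathbb T$; substituting $\bar z = 1/z$ gives $\eta \sum a_k z^k - \bar\eta \sum \overline{a_k} z^{-k} = $ const on $\mathbb T$, which forces $\eta a_k = \overline{\eta a_{-k}}$ for all $k \neq 0$ (comparing Laurent coefficients). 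This is precisely the condition $\eta a_k = \overline{\eta b_k}$ from Lemma~\ref{Campbell}(b) with $b_k = -a_{-k}$... except with a sign, so I'll need to track signs. Once I have $\eta a_k = \overline{\eta a_{-k}}$ for all $k$, the algebraic part $q$ of $P$ satisfies $\im(\eta q) \equiv$ const, so by the Harnack/Harmonic-function argument inside the proof of Lemma~\ref{Campbell}(b), $q$ is not proper, hence (going back through the reasoning that $P^{-1}(0)$ unbounded $\Leftarrow$ $q$ not proper, which is how Theorem~\ref{unbounded-thm}'s hypothesis was derived) the zero set of $P$ is unbounded. I'd want to make the implication "$q$ not proper $\Rightarrow P^{-1}(0)$ unbounded" explicit, which follows because $P(z) = q(z) + (\text{bounded near }\infty)$ and a standard argument degree/winding-number argument (or the argument principle for harmonic maps) gives a zero of $P$ in every sufficiently large region where $q$ is small.

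I expect the main obstacle to be the last implication — making rigorous that non-properness of the top-degree-governed algebraic part $q$ actually produces \emph{unboundedly many} zeros of $P$ itself (not just of $q$), since $P$ has the extra antiholomorphic-in-$1/\bar z$ lower-order terms. The safe way is: when $p(\mathbb T)$ is in a line, we showed $\eta a_k = \overline{\eta a_{-k}}$ for all $k$, which makes $\re(\eta P(z))$ reduce, after cancellation, to a function whose unbounded level set at value $0$ can be exhibited directly; concretely $\eta P(z) + \overline{\eta P(z)} = \eta p(z) - \eta p(1/\bar z) + \overline{\eta p(z)} - \overline{\eta p(1/\bar z)}$ and using $\eta a_k = \overline{\eta a_{-k}}$ one checks $\re(\eta p(z)) = \re(\eta p(1/\bar z))$ for \emph{all} $z \neq 0$, so $\re(\eta P(z)) \equiv 0$ on $\mathbb C \setminus \{0\}$; thus $P$ maps into the line $\{w : \re(\eta w) = 0\}$, and the remaining real coordinate $\im(\eta P(z))$ is a real harmonic function on $\mathbb C \setminus\{0\}$ which is nonconstant (as $P \not\equiv 0$) and hence, by Harnack's inequality applied on $|z| > 1$, unbounded both above and below — in particular it takes the value $0$ on an unbounded set. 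That gives unboundedness of $P^{-1}(0)$ cleanly and avoids any appeal to Lemma~\ref{Campbell-general} for the converse. I would then just remark that the forward direction (a)–(b) is immediate from Lemma~\ref{Campbell} applied to the algebraic part $q$ of $P$ as in~\eqref{algebraic-part}, with the reduction to $m = n$ already noted before the theorem, so the only real work is the converse just sketched.
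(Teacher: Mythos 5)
Your overall strategy coincides with the paper's for the main implication: you apply Lemma~\ref{Campbell} with $b_k=-a_{-k}$ to the algebraic part $q$ of $P$ from~\eqref{algebraic-part} (the non-properness of $q$ having been recorded before the theorem), and conclusions (a) and (b) are exactly the two scenarios of Lemma~\ref{Campbell}(b); your identification of the line condition with the coefficient identities $\eta a_k=\overline{\eta a_{-k}}$ (your $\im$-normalization; the paper's $\re$-normalization gives $\eta a_k+\overline{\eta a_{-k}}=0$) by comparing Laurent/Fourier coefficients on $\mathbb T$ is the paper's argument as well. For the partial converse you are actually more explicit than the paper: you show the coefficient identities force $\re(\eta P)\equiv 0$ on all of $\mathbb C\setminus\{0\}$, so that up to rotation $P$ is a single real-valued harmonic function, and you then argue that this function has an unbounded zero set. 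That computation is correct, and it usefully makes concrete a step the paper leaves largely implicit (the paper stops at the coefficient identities).

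The one step that does not stand as written is the final unboundedness claim: ``by Harnack's inequality applied on $|z|>1$, [the nonconstant harmonic function is] unbounded both above and below.'' That principle holds for harmonic functions on all of $\mathbb C$ (which is how the paper uses it inside the proof of Lemma~\ref{Campbell}), but it fails on an exterior or punctured domain: $u(z)=\log|z|$ is nonconstant, harmonic on $\mathbb C\setminus\{0\}$, vanishes on $\mathbb T$, and is positive for $|z|>1$, so its zero set is bounded. You therefore need the specific form of $\im(\eta P)$, not an abstract Harnack/Liouville statement. The repair is immediate: under your identities, $\im(\eta P(z))=2\sum_{k=1}^n\im(\eta a_k z^k)-2\sum_{k=1}^n\im(\eta a_k\bar z^{-k})$, where the second sum tends to $0$ as $|z|\to\infty$ and the first is a harmonic polynomial whose top homogeneous term $2\im(\eta a_n z^n)$ attains the values $\pm2|a_n|R^n$ on each circle $|z|=R$, dominating the $O(R^{n-1})$ remainder; hence $\im(\eta P)$ changes sign on every sufficiently large circle and, by the intermediate value theorem, vanishes there, so $P^{-1}(0)$ is unbounded. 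With that substitution your proof is complete and otherwise follows the paper's route.
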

 
Although part (b) of Theorem~\ref{unbounded-thm} is convoluted, it is not difficult to check in practice because $\eta$ is uniquely determined (up to irrelevant sign) and the zero sets of both harmonic polynomials  involved are simply unions of equally spaced lines   through the origin.
 
\begin{proof} We apply Lemma~\ref{Campbell} to the polynomial $q$ in~\eqref{algebraic-part}, which means letting $b_k = -a_{-k}$ for $k=1, \dots, n$. Since $q$ is not proper, part (b) of the lemma provides two possible scenarios, which are considered below. 

One possibility is that there exists a unimodular constant $\eta$ such that $\eta a_k = - \overline{\eta a_{-k}}$ for $k=1, \dots, n$.  Therefore, for $z\in \mathbb T$ we have  
\[
\re(\eta p(z))  = \re(a_0) +  \sum_{k=1}^n \left( re(\eta a_k z^k + \overline{ \eta a_{-k}} z^k \right) 
= \re(a_0) 
\]
which means that $p(\mathbb T)$ is contained in a line. The converse is true as well. If $p(\mathbb T)$ is contained in a line, then there exists a unimodular constant $\eta$ such that $\re(\eta p)$ is constant on $\mathbb T$. Considering the Fourier coefficients of $\re(\eta p)$, we find $\eta a_k + \overline{\eta a_{-k}}=0$ for all $1\le k\le n$.

The other possibility described in Lemma~\ref{Campbell} (b) transforms into part (b) of Theorem~\ref{unbounded-thm} with the substitution $b_k = -a_{-k}$.
\end{proof}

\section{Intersection of polynomial images of the circle}\label{sec:intersect}

As an application of Theorem~\ref{algebraic-thm}, we establish an upper bound for the number of intersections between two images of the unit circle $\mathbb T$ under Laurent polynomials. It is necessary to exclude some pairs of polynomials from consideration, because, for example, the images of $\mathbb T$ under any two of the Laurent polynomials
\[
p_\alpha (z) = z + z^{-1} + \alpha, \quad -2 < \alpha < 2,
\]
have infinite intersection. This is detected by the computation of polynomial $h$ in Theorem~\ref{algebraic-thm}, according to which $h(x, y) = -4y^2$ regardless of $\alpha$.   

\begin{theorem}\label{intersection-thm}
Consider two Laurent polynomials
\[
p(z) = \sum_{k=-m}^n a_k z^k \quad \text{and} \quad 
\widetilde{p}(z) = \sum_{k=-r}^s b_k z^k
\]
where $m, r\ge 0$, $n, s \ge 1$, and $a_{-m}a_nb_{-r}b_s \ne 0$. Then the intersection $p(\mathbb T)\cap \widetilde{p}(\mathbb T)$ consists of at most $4ns - 2(n-m) (s-r)$ points unless the corresponding polynomials $h$ and $\tilde h$ from Theorem~\ref{algebraic-thm} have a nontrivial common factor.  
\end{theorem}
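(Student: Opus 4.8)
The plan is to bound $|p(\mathbb T)\cap\widetilde p(\mathbb T)|$ by the number of points of $V\cap\widetilde V$, where $V=h^{-1}(0)$ and $\widetilde V=\widetilde h^{-1}(0)$ are the real algebraic sets provided by Theorem~\ref{algebraic-thm}, and then to count $V\cap\widetilde V$ via Bezout's theorem in $\mathbb P^2(\mathbb C)$, subtracting the intersection that is forced to occur at infinity. First I would note that $p(\mathbb T)\cap\widetilde p(\mathbb T)\subseteq V\cap\widetilde V$, and that a nonconstant common factor of $h,\widetilde h$ over $\mathbb C$ would, by conjugating coefficients, produce one over $\mathbb R$; so under the hypothesis of the theorem $h_{\mathbb C}$ and $\widetilde h_{\mathbb C}$ are coprime in $\mathbb C[w,\overline w]$. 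It is convenient to work throughout in the variables $w=x+iy$, $\overline w=x-iy$, treated as independent --- an invertible linear change of coordinates extending to an automorphism of $\mathbb P^2$ that preserves the line at infinity --- so that Lemma~\ref{degrees} applies verbatim.

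Homogenize: $\overline h(w,\overline w,\tau)=\tau^{2n}h_{\mathbb C}(w/\tau,\overline w/\tau)$ has degree $2n$ and $\overline{\widetilde h}$ has degree $2s$, and since their leading forms are nonzero (Lemma~\ref{degrees}) neither is divisible by $\tau$, so the two plane curves share no component and Bezout gives $\sum_{P\in\mathbb P^2(\mathbb C)}I_P(\overline h,\overline{\widetilde h})=4ns$. The crux is the contribution at infinity, which I expect to be concentrated at the two points $P_1=[1:0:0]$ and $P_2=[0:1:0]$ (the circular points $[1:\mp i:0]$ in the original coordinates). In the chart $w=1$ one has $\overline h(1,\overline w,\tau)=\sum_{a,b}c_{ab}\,\overline w^{\,b}\tau^{\,2n-a-b}$, where $c_{ab}$ is the coefficient of $w^a\overline w^{\,b}$ in $h_{\mathbb C}$, and the monomial $\overline w^{\,b}\tau^{\,2n-a-b}$ has degree $2n-a$; hence the order of vanishing at the origin of that chart --- that is, the multiplicity of $\{\overline h=0\}$ at $P_1$ --- equals $2n-\deg_w h_{\mathbb C}=2n-(m+n)=n-m$ by Lemma~\ref{degrees}, and symmetrically the multiplicity at $P_2$ is also $n-m$; likewise $\{\overline{\widetilde h}=0\}$ has multiplicity $s-r$ at each of $P_1,P_2$. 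By the standard inequality $I_P(C,D)\ge m_P(C)m_P(D)$ for plane curves (which is also correct when one side is a point off the other curve), the intersection number at infinity is at least $2(n-m)(s-r)$. Since each point of $V\cap\widetilde V$ is a real affine common zero and so contributes at least $1$ to the affine part of the Bezout sum, I conclude $|p(\mathbb T)\cap\widetilde p(\mathbb T)|\le|V\cap\widetilde V|\le 4ns-2(n-m)(s-r)$.

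The step needing the most care is the multiplicity computation at the circular points at infinity: it is exactly the content of Lemma~\ref{degrees} --- that $h_{\mathbb C}$ has degree precisely $m+n$ in $w$ and in $\overline w$ --- that determines these multiplicities, and one must also verify honestly that coprimality of $h,\widetilde h$ survives complexification and homogenization (the only possible new common factor after homogenization is $\tau$, excluded because $\deg h=2n$ exactly). A related subtlety worth recording: a direct count of solutions in the torus $(\mathbb C\setminus\{0\})^2$ --- e.g.\ via Bernstein's theorem, whose Newton polygons are contained in $\{0\le a,b\le m+n,\ a+b\le 2n\}$ and whose mixed volume works out to $2(m+n)(r+s)-4mr=4ns-2(n-m)(s-r)$ --- would miss a possible common zero at the origin and leave a spurious $+1$; the projective formulation above avoids this, since the origin, if present in $V\cap\widetilde V$, is simply one more affine point already counted. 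When $m=n$ or $r=s$ the estimate at infinity is vacuous and the argument degenerates to plain Bezout, consistent with the formula.
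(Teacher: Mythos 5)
Your proposal is correct and follows essentially the same route as the paper: homogenize $h_{\mathbb C}$ and $\widetilde h_{\mathbb C}$, read off multiplicities $n-m$ and $s-r$ at the two points at infinity from the partial degrees in Lemma~\ref{degrees}, apply the inequality $I_P(C,D)\ge m_P(C)\,m_P(D)$ together with Bezout's theorem, and subtract $2(n-m)(s-r)$. Your extra remarks (coprimality surviving complexification and homogenization, and the Bernstein-theorem aside) only make explicit details the paper leaves implicit.
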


In the special case of algebraic polynomials, $m=r=0$, the estimate in Theorem~\ref{intersection-thm} simplifies to $2ns$. In this case the theorem is due to Quine~\cite[Theorem 3]{Quine76}, where the bound $2ns$ is shown to be sharp.  A related problem of counting the self-intersections of $p(\mathbb T)$ was addressed in~\cite{Quine73} for algebraic polynomials and in~\cite{KovalevKalmykov} for Laurent polynomials.

\begin{proof}
Let $h_\mathbb C\in \mathbb C[w, \overline{w}]$ be the polynomial   associated to $p$ by Theorem~\ref{algebraic-thm}~(b). 
Consider its homogenization 
\[H(w,\overline{w},\zeta)=\zeta^{2n}h_{\mathbb C}(w/\zeta,\overline{w}/\zeta).\]
Since $h_{\mathbb C}$ has degree $m+n$ in the variable $w$, it follows that $H$ has a zero of order at least $2n-(m+n) = n-m$ at the point $(1, 0, 0)$ of the projective space $\mathbb C\mathbb P^2$. Similarly, it has a zero of order at least $n-m$ at the point $(0, 1, 0)$. 

The homogeneous polynomial $\widetilde{H}$ associated with $\widetilde{p}$ has zeros of order at least $s-r$ at the same two points. Therefore, the projective curves $H=0$ and $\widetilde{H} = 0$ intersect with multiplicity at least $(n-m)(s-r)$ at each of the points $(1, 0, 0)$ and $(0, 1, 0)$ (Theorem~5.10 in \cite[p.~114]{Walker}). 

Bezout's theorem implies that, unless $H$ and $\widetilde{H}$ have a nontrivial common factor, the projective curves $H=0$ and $\widetilde{H}=0$ have at most 
$\deg H \deg \widetilde{H} = 4ns$ intersections in $\mathbb C\mathbb P^2$, counted with multiplicity. Subtracting the intersections at two aforementioned points, we are left with at most  
 $4ns - 2(n-m) (s-r)$ points of intersection in the affine plane. \end{proof}

\bibliographystyle{amsplain} 
\bibliography{references.bib}

\end{document}